\newcommand{\Godel}{G\"odel}
\newcommand{\Lowenheim}{L\"owenheim}
\newcommand{\Poincare}{Poincar\'{e}}
\renewcommand{\P}{{\mathbb P}}
\newcommand{\Vbar}{{\overline{V}}}
\newcommand{\of}{\subseteq}
\newcommand{\set}[1]{\{\,{#1}\,\}}
\newcommand{\Mantle}{{\mathord{\rm M}}}
\def\<#1>{\langle#1\rangle}
\newcommand{\ZFC}{{\rm ZFC}}
\newcommand{\ZF}{{\rm ZF}}
\newcommand{\scrA}{{\mathcal A}}
\begin{document}

\title{From geometry to geology
%
}
\subtitle{An invitation to mathematical pluralism through the phenomenon of  independence}


\author{Jonas Reitz}


\institute{J. Reitz \at
              The New York City College of Technology, CUNY\\
              300 Jay Street, Brooklyn, NY 11201 \\
              Tel.: +1-718-260-5380\\
              \email{jreitz@citytech.cuny.edu}           
}

\date{Received: date / Accepted: date}

\maketitle

\begin{abstract}
This paper explores how a pluralist view can arise in a natural way out of the day-to-day practice of modern set theory.  By contrast, the widely accepted orthodox view is that there is an ultimate universe of sets $V$, and it is in this universe that mathematics takes place.  From this view, the purpose of set theory is ``learning the truth about $V$.''  It has become apparent, however, that the phenomenon of independence - those questions left unresolved by the axioms - holds a central place in the investigation.  This paper introduces the notion of independence, explores the primary tool (``soundness'') for establishing independence results, and shows how a plurality of models arises through the investigation of this phenomenon.  Building on a familiar example from Euclidean geometry, a template for independence proofs is established.  Applying this template in the domain of set theory leads to a consideration of forcing, the tool par excellence for constructing universes of sets.  Fifty years of forcing has resulted in a profusion of universes exhibiting a wide variety of characteristics - a multiverse of set theories.  Direct study of this multiverse presents technical challenges due to its second-order nature.  Nonetheless, there are certain nice ``local neighborhoods'' of the multiverse that are amenable to first-order analysis, and \emph{set-theoretic geology} studies just such a neighborhood, the collection of grounds of a given universe $V$ of set theory.  I will explore some of the properties of this collection, touching on major concepts, open questions, and recent developments.

\keywords{independence \and pluralism \and set theory \and multiverse \and set-theoretic geology}
\subclass{00A30 \and 03E35}
\end{abstract}

\section{Introduction}
\label{Section.Introduction}

This paper is about how a pluralist view can arise in a natural way out of the day-to-day practice of modern set theory.  The tools and techniques common in set theory today act to acclimatize the practitioner to easy movement between set-theoretic universes.  As in any discipline, the day-to-day practice exerts a gentle but undeniable influence on the philosophical viewpoint, and, while this has not (yet) engendered a general embracement of the multiverse view of set theory, it has at the least influenced the ``working perspective.'' To paraphrase Davis and Hersch \cite{DavisHersh1981:TheMathematicalExperience}, the average working set theorist may hold the universe view on Sunday, but she is a multiversist during the week - or at least, she behaves so.

How has this situation arisen?  Why is the work of set theory so inextricably tied to the construction and investigation of diverse set-theoretic universes?  To address this question, I will explore an idea that has become absolutely central to set theory over the past century, the phenomenon of independence.  This notion will be introduced and defined for the layman, and a strategy for establishing independence results will be discussed in detail.  Some time will be spent on an extended example of an independence proof taken from a standard high school mathematical context, Euclidean geometry.  The lessons of this example will be generalized to lay the groundwork for independence results in set theory, motivating the study of alternative universes as a central theme. Finally, the technique of \emph{forcing} will be introduced as a primary universe-building tool, employed in nearly every set-theoretic independence result of the past half-century.  Careful examination of the mechanics of forcing lead to \emph{set-theoretic geology}, the study of a particular nice neighborhood of the set-theoretic multiverse - a first-order realization of the pluralistic view of set-theory.

Who is this paper for?  If you are inclined towards philosophy of mathematics and would like to see how a pluralist perspective might arise out of traditional mathematical practice, then this paper could be for you.  If you have some mathematical fluency but know little of logic or set theory, and would like a quick-and-dirty motivation for the work of modern set theorists, you may find just such an (incomplete and unsatisfying) explanation here.  If you are a working set theorist you have no doubt arrived at your own conclusions on these matters - but you may find some entertainment in exploring the story I have to tell.  In the event that you are misrepresented here, I offer my regrets.

Let's begin with a non-pluralist perspective.

\section{Independence}
\label{Section.Independence}

\subsection{The orthodox view of set theory}

According to the widely accepted orthodox view of set theory, there is an ultimate universe of sets, $V$, in which mathematics takes place.  All mathematical objects - natural numbers, real numbers, all the varieties of groups, topological spaces, etc. -  exist in $V$.  According to this view, the primary goal of set theory is ``learning the truth about $V$.''

The underlying axioms of $V$ are the $\ZFC$ axioms, the Zermelo-Fraenkel axioms together with the axiom of choice.  Set theory studies the consequences of the $\ZFC$ axioms, and according to the orthodox view, we are thus uncovering the truth about the mathematical universe $V$.

\subsection{The troubling phenomenon of independence}

The orthodox view of set theory suggests a certain attitude when presented with new set-theoretic ideas: while we may not know the truth of the matter at the outset, hard work and cleverness should in principle allow us to determine it.  Given a proposition about sets, either it is true or it is false, and the role of the set theorist is to discover which is the case.  The nature of the proposition can vary widely, and is limited only by what we can express in the language of set theory -- first order logic, with the membership relation $\in$ -- no great limitation, as this language is highly, even staggeringly, expressive.  If we refer to such a proposition by the letter $P$, then (for example) $P$ might be the statement ``every set of ordinals has a least element'' (which is true, a basic fact about ordinals), or ``there exists a set $x$ which is a member of itself'' (false, as it contradicts the axiom of foundation).  Faced with such a $P$, a natural (and, as it happens, distressingly ineffective) approach is to bring to bear the core process of mathematics, that of \emph{proof}: we begin with the axioms, and from them we try to prove $P$ (or, in the case that we suspect $P$ is false, we try to disprove $P$ -- put another way, we try to prove the negation $\neg P$).  This process comprises a great deal of the day-to-day activity of mathematicians, and it is an idiosyncratic and highly personal process, filled with painstaking detail work, wild leaps of creativity, meticulous notations, hastily scrawled diagrams, long walks, and staring again and again at past notes and dog-eared references.  At the end of the day, sometimes we have proved $P$, and sometimes we have proved $\neg P$, and sometimes we are simply forced to set down our pencils and try again tomorrow.  (Aside: Rohit Parikh, one of my graduate professors and a great logician, once commented that a reasonable strategy, in the absence of compelling reasons to either believe or disbelieve $P$, is to spend odd-numbered days working on a proof of $P$ and even-numbered days working on a proof of $\neg P$.)  Most mathematicians know all too well the experience of repeated failure, setting down the pencil, day after day, and returning to gnaw on the problem again.  It is at this point in the tale that the phenomenon of \emph{independence} raises its head, terrible and marvelous, and a change in perspective begins to present itself.

\begin{definition}[independence]
A proposition $P$ is \textbf{independent} of the axioms $\scrA$ if $P$ can be neither proved nor disproved from $\scrA$.
\end{definition}

A note on notation - I will use capital letters $P$ and $Q$ to represent mathematical propositions (or statements, the terms can be taken interchangeably), and while I will generally give examples in plain English it is assumed that any such proposition could be written out in strict, symbolic, first-order logic under sufficiently dire circumstances  (in much the way that pseudocode can be translated into actual code, and thence into machine language).  I will use the script letter $\scrA$ to represent a set of axioms - where an axiom is just another mathematical proposition, and $\scrA$ is a collection of such propositions that we have distinguished by taking them to be true at the outset.  If we are discussing  geometry then $\scrA$ might consist of Euclid's postulates, in the case of group theory  $\mathcal A$ would be the three group axioms, in set theory $\mathcal A$ might be the \ZFC\ axioms, and so on.

Also note that the notion of independence is not a property of $P$ alone, but defines a relationship between $P$ and $\scrA$ - and, while we may often make utterances such as ``$P$ is independent,'' there is always an underlying axiom system $\scrA$ implied.  

When a proposition $P$ is independent of your axioms, then \emph{all} attempts to prove or disprove $P$ from $\scrA$ are doomed to fail.  Of course, from the perspective of the working mathematician, it is not at all clear whether a given $P$ is independent of the axioms.  Repeated failures to prove or disprove $P$ may add emotional weight to the argument, but little more - it might simply be the case that proving $P$ requires some new approach or insight, and it is for this reason that mathematicians carry on in the face of repeated failure.  On the other hand, \Godel's Incompleteness Theorems showed us that the phenomenon of independence is absolutely ubiquitous, arising naturally for any sufficiently powerful collection of axioms.  (Aside: This suggests an update to Parikh's strategy - In the absence of compelling evidence regarding the truth of $P$, spend odd-numbered days working on a proof of $P$, even-numbered days working on a proof of $\neg P$, and weekends working on a proof that $P$ is independent.)  

How are we to proceed?  Faced with a seemingly intractable proposition $P$, and a growing suspicion that $P$ may indeed be independent of the axioms, what strategy can we employ to establish independence rigorously?  How do we go about \emph{proving} that $P$ is independent of $\scrA$?

\subsection{The soundness property: proving independence by building models}
\label{Subsection.ProvingIndependence}

At its heart, an independence proof must establish two negative statements:

\begin{enumerate}
    \item {That there is no proof of $P$ from $\scrA$.}
    \item {That there is no proof of $\neg P$ from $\scrA$.}
\end{enumerate}

A (non-rigorous, but intuitively appealing) rule of thumb in mathematics: Negative statements are hard!  To show something exists, one can simply work towards finding an example.  To show something does \emph{not} exist, the strategy is less clear.  In the case of independence, how can we establish that ``no proof exists''?  A typical approach to negative statements, familiar to any undergraduate student of proofs and logic, is ``assume it does exist and show a contradiction.''  This is an attractive and useful notion, but it is not clear how it applies in this case:  Let us assume we have a proof of $P$.  What contradiction might arise?  There does not seem to be much to get our hands on.  However, we have a tremendous tool at our disposal, that of \emph{soundness}.

\begin{definition}[soundness]
If there is a proof of $P$ from $\scrA$, then every structure satisfying $\scrA$ must satisfy $P$.
\end{definition}

Soundness represents the most fundamental connection between \emph{syntax}, the strings of written symbols on the page that comprise our propositions and proofs, and \emph{semantics}, the meaning we ascribe to those symbols when we consider the groups, or sets, or geometric objects they describe.  The shorthand slogan for soundness might be ``if you can prove it, it must be true.''  This is absolutely essential to the business of mathematics - we search for proofs exactly because we'd like to establish the truth of a given proposition.  In group theory, as an example, if we've proved that a property $P$ follows from the group axioms $\scrA$, then we know that $P$ must be true in all groups.  This is soundness.

Soundness connects proofs and structures - this is nice, because the human imagination has a great capacity for imagining new structures, and we can employ this capacity to our advantage in establishing facts about proofs.  In particular, an equivalent formulation of soundness (take the contrapositive!) yields the following principle, which will form the foundation of our independence proofs:

\begin{theorem}If we can exhibit a structure satisfying $\scrA$ in which $\neg P$ holds, then there is no proof of $P$ from $\scrA$.
\end{theorem}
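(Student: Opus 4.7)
The plan is to derive the theorem as the contrapositive of the soundness principle stated just above. Soundness asserts: if there is a proof of $P$ from $\scrA$, then every structure satisfying $\scrA$ satisfies $P$. The conclusion I want is: if some structure satisfies $\scrA$ together with $\neg P$, then there is no proof of $P$ from $\scrA$. These two implications are logically equivalent, so the proof amounts to unpacking that equivalence cleanly.

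The key steps, in order: First, suppose for contradiction that we are given both a structure $M$ that satisfies $\scrA$ and in which $\neg P$ holds, and a proof of $P$ from $\scrA$. Second, apply soundness to the hypothesized proof to conclude that every structure satisfying $\scrA$ must satisfy $P$, and specialize this universal statement to the structure $M$ already in hand, obtaining that $P$ holds in $M$. Third, observe that by assumption $\neg P$ also holds in $M$, and no single structure can simultaneously make a proposition and its negation true — this is just the semantic meaning of the connective $\neg$. The resulting contradiction rules out the assumed proof, which is exactly what we wanted.

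I expect the main obstacle here to be conceptual rather than technical. The reader, especially the non-specialist reader at whom this paper is aimed, must be comfortable enough with the syntax/semantics distinction to appreciate that ``a proof of $P$'' (a finite string of symbols, manipulated formally) and ``a structure in which $\neg P$ holds'' (an honest mathematical universe, such as a group, a geometry, or a model of $\ZFC$) are objects of genuinely different kinds, and that it is precisely soundness that licenses the passage between them. Once that conceptual bridge is in place, the argument itself is a one-line application of contraposition; all the real work has already been absorbed into the statement of soundness, which is why I would present the theorem immediately after introducing soundness and treat the proof essentially as a reading of the definition backwards.
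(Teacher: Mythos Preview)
Your proposal is correct and matches the paper's approach exactly: the paper simply remarks that the theorem is obtained by taking the contrapositive of soundness, and your argument is precisely that contraposition written out in detail. There is nothing to add.
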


This gives us a concrete ``hands-on'' strategy for establishing the two negative statements of an independence proof:

\begin{corollary}\label{Corollary.ProvingIndependence}
If we can exhibit two structures, both satisfying $\scrA$, such that $P$ holds in one structure and $\neg P$ holds in the other, then $P$ is independent of $\scrA$.
\end{corollary}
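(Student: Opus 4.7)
The plan is to derive this corollary directly from the preceding theorem by applying it twice, once in each direction. Unpacking the definition of independence, I need to establish two negative statements: that no proof of $P$ from $\scrA$ exists, and that no proof of $\neg P$ from $\scrA$ exists. Each is precisely the sort of claim the theorem is designed to deliver.

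First I would take the structure in which $\neg P$ holds (while still satisfying $\scrA$) and feed it straight into the theorem; the conclusion is immediately that there is no proof of $P$ from $\scrA$, which dispatches the first half of independence. Next I would repeat the argument with the roles of the two structures swapped. The key observation is that the theorem is stated for an arbitrary proposition, so it applies equally well with $\neg P$ substituted for $P$. Viewing the second structure as one that satisfies $\scrA$ together with $\neg(\neg P)$, the theorem yields that there is no proof of $\neg P$ from $\scrA$, dispatching the second half.

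Combining the two conclusions gives independence of $P$ from $\scrA$ by definition, and the proof is complete. There is essentially no obstacle to overcome here beyond what the theorem has already done for us; the real conceptual content was absorbed into the soundness principle and its contrapositive. The only subtlety worth flagging for the reader is the symmetry point, that the theorem treats the proposition and its negation on equal footing, so one structure witnesses the failure of a proof of $P$ while the other, in mirror image, witnesses the failure of a proof of $\neg P$.
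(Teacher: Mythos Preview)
Your proposal is correct and matches the paper's intent: the corollary is stated immediately after the contrapositive-of-soundness theorem and is treated as following directly from it, with no separate proof given. Your two applications of the theorem (one for $P$, one for $\neg P$) are exactly the unwinding the paper leaves implicit.
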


\subsection{An extended example: the independence of the parallel postulate}
\label{Subsection.IndependenceExample-EuclidsFifthPostulate}

Ultimately I am a set theorist, and a great deal of the current work in set theory comes down to independence proofs - showing that some statement $P$ about sets is independent of the \ZFC\ axioms.  In order to make more concrete the day-to-day business of independence proofs, I'd like to spend some time exploring an example in a more familiar context, an area of mathematics that you may recall from your own secondary school mathematics curriculum - that of Euclidean geometry.  This example will not constitute a rigorous proof of independence, but may at least serve as an informal proof - an solid attempt to convince the reader, with enough content to point the determined student in the right direction if he or she wishes to pursue the matter.  It will also (more importantly) illustrate the broad structure and many key elements of an independence proof, and will serve as a model from which we can discuss independence in other contexts.

\subsubsection{Euclidean geometry}\label{Subsubsection.Euclideangeometry}

In the United States, Euclidean geometry is generally presented at the high school level and constitutes the student's first (and in many cases, only) introduction to the form and practice of formal proof. The topic is introduced via Euclid's postulates, the axioms that form the basis for plane geometry.

\begin{definition}[The five postulates of Euclidean geometry]

Primitive (undefined) notions: point, line.

\begin{enumerate}
    \item {A line segment can be drawn joining any two points.}
    \item {Any line segment can be extended indefinitely in a line.}
    \item {Given any line segment, a circle can be drawn having the segment as radius and one endpoint as center.}
    \item {All right angles are congruent.}
    \item {(Parallel Postulate)  Given a line and a point not on the line, there is exactly one line passing through the point parallel to the given line.}

\end{enumerate}
\end{definition}


The discussion below will focus on the fifth postulate on the list, the parallel postulate, which has a long and contentious history.  It should be noted that this presentation is not strictly correct - the fifth postulate stated by Euclid was not the parallel postulate, but a slightly-more-complicated, equivalent-under-certain-reasonable-assumptions statement - a distinction that we will ignore in this example.  The controversy of the parallel postulate is whether it is, strictly speaking, necessary.  Perhaps it is simply a consequence of the first four postulates, in which case it can safely be eliminated from our list of axioms (which, in the interest of simplicity, aesthetics, and so on, we'd like to keep as compact as possible).   Many, many attempts were made to find a proof of the parallel postulate from the first four postulates.  In desperation, some attempts were even made to prove the parallel postulate false.  In the first half of the 19th century it was finally established that these attempts were doomed to fail from the start, when the parallel postulate was shown to be independent of the other postulates (the history is complicated, but the first published results exhibiting models of geometry in which the parallel postulate failed were produced independently by Bolyai and Lobachevsky around 1830).  Let us look in more detail at a proof of this result, gleaning what we can from the methods employed.

Before we begin, there is one issue that needs to be addressed with a little care - the meaning of the word ``parallel.''  This is a word that carries strong intuitive associations (two parallel lines never meet, are always the same distance apart, have the same perpendiculars, have equal corresponding angles when cut by a transverse, and so on).  In the most familiar context, the Euclidean plane, these various notions all coincide - but as we move to other models, we will find that they need not always be the same.  For example, we may find two lines that never cross, but a perpendicular to one may not be perpendicular to the other - is such a pair of lines parallel? It depends on the definition.  Because of this, we need to carefully define which of these various notions we will refer to as ``parallel'' - the accepted definition is:

\begin{definition}
By definition, two lines are \emph{parallel} if they do not intersect.
\end{definition}

\subsubsection{The independence of the parallel postulate}\label{Subsubsection.Independenceofparallelpostulate}

We seek to establish the following theorem:

\begin{theorem}
The parallel postulate is independent of postulates 1-4.
\end{theorem}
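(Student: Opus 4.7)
The plan is a direct application of Corollary \ref{Corollary.ProvingIndependence}: I need to exhibit two structures, each satisfying postulates 1--4, such that the parallel postulate $P$ holds in one and fails in the other. For the first structure I take the standard Euclidean plane $\mathbb{R}^2$, with ``points'' and ``lines'' interpreted in the familiar Cartesian way. All five postulates hold here essentially by construction, so in particular postulates 1--4 and $P$ are simultaneously satisfied; this is the easy half.

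The substantive work lies in the second structure. I would use the Poincar\'e disk model: let the ``points'' be the points of the open unit disk $D = \{(x,y) : x^2 + y^2 < 1\}$, and interpret ``lines'' as either diameters of $D$ or arcs of circles meeting the boundary of $D$ orthogonally. Angle measure is inherited from $\mathbb{R}^2$ (the model is conformal), while distance is given by the hyperbolic metric, under which the boundary circle lies infinitely far from any interior point. In this setting the parallel postulate fails dramatically: given a ``line'' $\ell$ and a point $p$ not on $\ell$, one can draw infinitely many distinct orthogonal arcs through $p$ that avoid $\ell$, each a ``line'' through $p$ parallel to $\ell$.

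The main obstacle is verifying that postulates 1--4 genuinely hold in this exotic interpretation. Postulates 1 and 3 reduce to standard facts about circles orthogonal to a fixed circle (uniqueness of the orthogonal arc through two interior points, and the existence of hyperbolic circles of arbitrary hyperbolic radius), while postulate 4 follows immediately from conformality of the model. Postulate 2 is the subtlest: although the arcs representing ``lines'' physically terminate at the boundary of $D$, the hyperbolic arclength along such an arc diverges as one approaches the boundary, so every ``line segment'' really can be extended indefinitely without ever reaching an endpoint inside the model. Once these checks are in place, the Euclidean plane and the Poincar\'e disk together supply the two structures demanded by Corollary \ref{Corollary.ProvingIndependence}, establishing the independence of the parallel postulate from postulates 1--4.
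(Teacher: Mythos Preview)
Your proposal is correct and follows essentially the same route as the paper: apply Corollary~\ref{Corollary.ProvingIndependence} with the Euclidean plane as the model of $P$ and the Poincar\'e disk as the model of $\neg P$, checking postulates 1--4 in the latter via conformality and the behavior of orthogonal arcs. The only difference is tone---you invoke the hyperbolic metric and divergent arclength explicitly where the paper argues more pictorially---but the underlying strategy and the two models chosen are identical.
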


\begin{proof}

We will follow the template outlined in Corollary \ref{Corollary.ProvingIndependence}.  Let us take the axioms $\scrA$ to consist of Euclid's postulates $1-4$, and the proposition $P$ to be the parallel postulate.  To complete the proof, we must:
\begin{itemize}
\item {
    exhibit a model of $\scrA$ in which $P$ holds, and
    }
\item {
    exhibit a model of $\scrA$ in which $P$ fails.
    }
\end{itemize}

The first model will be the familiar Euclidean plane (or just ``the plane'').  I will not describe it from first principles but will instead rely on the model in your head, asking you to reason about this familiar object via your intuitive understanding.  Because of this, the resulting ``proofs'' will not be reduced to basic principles, but will once again be reduced only to your intuitive notions about the plane - we will consider a proposition ``proved'' if it seems intuitively obvious, or can be reduced to principles that seem intuitively obvious.  The second model will be somewhat more technical, and I will describe it in more technical detail - but ultimately, the associated proof will also rest on your geometric intuitions (though you will have to do a little more work to apply them).

\subsubsection{The Euclidean plane}\label{Subsubsection.Euclideanplane}

\begin{lemma}[A model of $P$]\label{Lemma.Euclideanplane}
The standard Euclidean plane, with points and lines interpreted in the familiar way, satisfies Euclid's postulates 1-4 as well as the parallel postulate.
\end{lemma}

\begin{proof}

Our intuitive understanding of the Euclidean plane (or simply ``the plane,'' that flat, boundless, two-dimensional surface that we freely represent on paper, chalkboards, bar napkins, computer screens, and so on) is so strong that the proof of this lemma seems entirely obvious - so much so, that it is unsatisfying.  For example, if you show me two points, it is ``obvious'' that I can draw a line segment joining them (postulate 1) - I can picture it clearly,  there on the paper!  There seems to be no foundation upon which to rest these ``obvious'' facts (akin to asking ``why?'' when presented with the basic arithmetical fact $2+2=4$).  The problem here is that we have not given a rigorous definition of the Euclidean plane - we have not provided any formal basis for our arguments.  Let this pass for now, noting only that while it \emph{can} be done, the presentation adds a great deal of formal complexity while contributing little to our intuitive understanding of the picture.

Proof of Lemma \ref{Lemma.Euclideanplane}:

\begin{itemize}
\item Given any two points, we can draw a line segment between them, and so the Euclidean plane satisfies postulate 1 (Fig. \ref{fig.Euclidean1}).
\item Given a line segment, we can extend it in both directions to obtain a line (postulate 2, Fig. \ref{fig.Euclidean2}). 

\begin{figure}[h]
  \centering
  \begin{minipage}[t]{0.3\textwidth}
       \begin{adjustbox}{width=\textwidth} 
        \begin{tikzpicture}[line cap=round,line join=round,>=triangle 45,x=1.0cm,y=1.0cm]
            \clip(0,0) rectangle (4,4);
            \draw [dash pattern=on 5pt off 5pt] (1,3)-- (3,1);
            \begin{scriptsize}
            \draw [fill=black] (1,3) circle (2.5pt);
            \draw (1.2,3.3) node {$A$};
            \draw [fill=black] (3,1) circle (2.5pt);
            \draw (3.3,1.2) node {$B$};
            \end{scriptsize}
        \end{tikzpicture}
        \end{adjustbox}
    \caption{The dashed line segment connects two given points.}
    \label{fig.Euclidean1}
  \end{minipage}
  \hspace{2cm}
  \begin{minipage}[t]{0.3\textwidth}
       \begin{adjustbox}{width=\textwidth} 
        \begin{tikzpicture}[line cap=round,line join=round,>=triangle 45,x=1.0cm,y=1.0cm]
            \clip(0,0) rectangle (4,4);
            \draw (1,3)-- (3,1);
            \draw [dash pattern=on 5pt off 5pt] (0,4)-- (1,3);
            \draw [dash pattern=on 5pt off 5pt] (3,1)-- (4,0);
            \begin{scriptsize}
            \draw [fill=black] (1,3) circle (2.5pt);
            \draw (1.2,3.3) node {$A$};
            \draw [fill=black] (3,1) circle (2.5pt);
            \draw (3.3,1.2) node {$B$};
            \end{scriptsize}
        \end{tikzpicture}
        \end{adjustbox}
    \caption{The dashed line extends the given line segment.}
    \label{fig.Euclidean2}
  \end{minipage}
\end{figure}

\item Given a line segment, we can use a standard protractor to draw a circle having the segment as a radius and one endpoint as center (postulate 3, Fig. \ref{fig.Euclidean3}).

\begin{figure}[ht]
  \centering
  \begin{minipage}[t]{0.3\textwidth}
       \begin{adjustbox}{width=\textwidth} 
        \begin{tikzpicture}[line cap=round,line join=round,>=triangle 45,x=1.0cm,y=1.0cm]
        \clip(0,0) rectangle (4,4);
        \draw [line width=1.6pt] (1,3)-- (2,2);
        \draw [dash pattern=on 5pt off 5pt] (2,2) circle (1.41421356237cm);
        \begin{scriptsize}
        \draw [fill=black] (1,3) circle (2.5pt);
        \draw [fill=black] (2,2) circle (2.5pt);
        \end{scriptsize}
        \end{tikzpicture}
        \end{adjustbox}
    \caption{The dashed circle has the given segment as a radius.}
    \label{fig.Euclidean3}
  \end{minipage}
  \hfill
  \begin{minipage}[t]{0.3\textwidth}
       \begin{adjustbox}{width=\textwidth} 
        \begin{tikzpicture}[line cap=round,line join=round,>=triangle 45,x=1.0cm,y=1.0cm]
        \clip(0,0) rectangle (4,4);
        \draw[color=black,fill=black,fill opacity=0.1] (1.8,1.9) -- (2,2) -- (1.9,2.2) -- (1.7,2.1)  -- cycle; 
        \draw (1,1.5)-- (2,2);
        \draw (2,2)-- (1.5,3);
        \draw[color=black,fill=black,fill opacity=0.1] (2.65, 1.63) -- (2.6,1.4) -- (2.83, 1.35) -- (2.87,1.58)  -- cycle; 
        \draw (2.8,2.4)-- (2.6,1.4);
        \draw (2.6,1.4)-- (3.6,1.2);
        \end{tikzpicture}
        \end{adjustbox}
    \caption{Right angles are congruent.}
    \label{fig.Euclidean4}
  \end{minipage}
  \hfill
  \begin{minipage}[t]{0.3\textwidth}
       \begin{adjustbox}{width=\textwidth} 
        \begin{tikzpicture}[line cap=round,line join=round,>=triangle 45,x=1.0cm,y=1.0cm]
        \clip(0,0) rectangle (4,4);
        \draw (0,3)-- (2,0);
        \draw [dash pattern=on 5pt off 5pt] (1,3)-- (3,0);
        \begin{scriptsize}
        \draw [fill=black] (2,1.5) circle (2.5pt);
        \draw (2.2,1.8) node {$A$};
        \end{scriptsize}
        \end{tikzpicture}
        \end{adjustbox}
    \caption{The dashed line is parallel to the given line and passes through the given point.}
    \label{fig.Euclidean5}
  \end{minipage}  
\end{figure}

\item Given two right angles, we can slide one over the top of the other, and we will see that they align perfectly - that is, all right angles are congruent (postulate 4, Fig. \ref{fig.Euclidean4}).

\item Given a line and a point not on the line, it is clear that we can draw a line parallel to the first, through the point - just picture it there! Or, to be more methodical, let us imagine any line at all passing through the point, and then adjust the direction of that line (rotate it) until it is parallel to the first.  You will notice there is exactly one ``sweet spot'' in the rotation in which the lines become parallel - it is this fact that demonstrates that only one such parallel line exists through the given point (parallel postulate, Fig. \ref{fig.Euclidean5}).
\end{itemize}

If you find these arguments compelling, then we have completed the first half of our independence proof - exhibiting a model in which $\scrA$ holds, and $P$ is true.  If you are not yet convinced, you might consider playing around with geometric constructions over at GeoGebra (\url{https://www.geogebra.org/geometry}), or, for the rigorous approach, take a look at Hilbert's classic work Foundations of Geometry \cite{Hilbert:FoundationsOfGeometry}, which recasts Euclidean Geometry on a firm formal foundation.

\qed
\end{proof}

\subsubsection{The \Poincare\ disk}\label{Subsubsection.Poincaredisk}

For the second half of our theorem, we will need to venture beyond the comfort of our familiar Euclidean plane into a more exotic model of the axioms of Euclid's postulates 1-4.  How do we get our hands on such a thing?  We use a classic strategy of mathematicians - start with a familiar structure, and see how we can modify it to produce something new...  In this case, we'll start with the Euclidean plane (or, properly, a subset of it - the unit disk), and will give a new interpretation of the primitive notions of point and line.  This model, the Poincar\'{e} disk, was introduced by Eugenio Beltrami in 1868 \cite{Beltrami1868}.

\begin{definition}[The \Poincare\ disk]

\begin{itemize}
\item {
    The universe consists of the unit disk (all those points on the plane whose distance from the origin is less than $1$).
    }
\item {
    ``Points'' are points in the unit disk.
    }
\item {
    ``Lines'' are segments of circles that intersect the boundary of the universe (the unit circle) perpendicularly.
    }
\end{itemize}
\end{definition}

Notice that the most fundamentally odd aspect of the \Poincare\ disk is the interpretation of ``lines.''  In coming to terms with a nonstandard definition of this type, it helps to recognize at the start that we are doing something in direct opposition to our intuition - as we explore this model further we will have much discussion of lines, and we must be on guard to suppress our carefully and deeply cultivated intuitions about this word. The importance of lines is not that they are indefinitely long in each direction, and ``straight,'' and so on - the importance is the relationships that hold among our primitive notions, points and lines. 

In getting a feeling for this new idea of lines, it's worth noticing that not every segment of a circle represents a line - it must intersect the boundary perpendicularly.  On the other hand, the inherent symmetry of circles shows that any segment perpendicular to the unit circle on one end, will also be perpendicular on the other end.  A final note about lines - an actual straight line segment passing through the center of the disk will also constitute a line in the sense of our \Poincare\ disk (note it is perpendicular to the boundary on either side) - we can reconcile this with our definition by thinking of it as a ``circle of infinite radius,'' or we can simply add it to the definition of line as an additional possibility.

One of the best ways to build your intuition about the \Poincare\ disk is to simply play around with it - and, while this is possible (but somewhat laborious) with traditional straight-edge-and-compass, modern technology provides a number of friendly alternatives - I recommend NonEuclid (\url{https://www.cs.unm.edu/~joel/NonEuclid/NonEuclid.html}) for an interactive tour of the \Poincare\ disk, and Hyperbolic Geometry in the \Poincare\ Disc (\url{https://www.geogebra.org/m/R5e9AggU}) for a more full-featured playground environment.

\begin{lemma}[A model of $\neg P$]
The \Poincare\ disk satisfies Euclid's postulates 1-4, but does not satisfy the parallel postulate.
\end{lemma}

\begin{proof}
The proof will utilize a combination of pictures, words, our newly-developing intuition about the \Poincare\ disk, hand-waving, redirection and blatant appeals to authority.

 \begin{figure}[ht]
  \centering
  \begin{minipage}[t]{0.3\textwidth}
        \begin{adjustbox}{width=\textwidth} 
        \begin{tikzpicture}[line cap=round,line join=round,>=triangle 45,x=2.0cm,y=2.0cm]
        \clip(-1.023968781312896,-1.0228225671654467) rectangle (1.041708718858042,1.0428549330054961);
        \draw(0.,0.) circle (2.cm);
        \draw [shift={(-1.1671490911322815,1.0230626589849616)},dash pattern=on 4pt off 4pt]  plot[domain=5.171649604390854:5.8239246831734235,variable=\t]({1.*1.1869684937437437*cos(\t r)+0.*1.1869684937437437*sin(\t r)},{0.*1.1869684937437437*cos(\t r)+1.*1.1869684937437437*sin(\t r)});
        \begin{scriptsize}
        \draw [fill=black] (-0.6409832092890928,-0.040912749529949305) circle (2.5pt);
        \draw [fill=black] (-0.1031736826173693,0.4968967771417755) circle (2.5pt);
        \end{scriptsize}
        \end{tikzpicture}
        \end{adjustbox}
    \caption{The dashed line segment connects two given points.}
    \label{fig.Poincare1}
  \end{minipage}
  \hfill
  \begin{minipage}[t]{0.3\textwidth}
        \begin{adjustbox}{width=\textwidth} 
        \begin{tikzpicture}[line cap=round,line join=round,>=triangle 45,x=2.0cm,y=2.0cm]
        \clip(-1.023968781312896,-1.0228225671654467) rectangle (1.041708718858042,1.0428549330054961);
        \draw(0.,0.) circle (2.cm);
        \draw [shift={(-1.1671490911322815,1.0230626589849616)}] plot[domain=5.171649604390854:5.8239246831734235,variable=\t]({1.*1.1869684937437437*cos(\t r)+0.*1.1869684937437437*sin(\t r)},{0.*1.1869684937437437*cos(\t r)+1.*1.1869684937437437*sin(\t r)});
        \draw [shift={(-1.1671490911322848,1.0230626589849632)},dash pattern=on 4pt off 4pt]  plot[domain=5.8239246831734235:6.263592470054243,variable=\t]({1.*1.1869684937437475*cos(\t r)+0.*1.1869684937437475*sin(\t r)},{0.*1.1869684937437475*cos(\t r)+1.*1.1869684937437475*sin(\t r)});
        \draw [shift={(-1.1671490911322815,1.023062658984961)},dash pattern=on 4pt off 4pt]  plot[domain=4.863365559655842:5.171649604390855,variable=\t]({1.*1.1869684937437432*cos(\t r)+0.*1.1869684937437432*sin(\t r)},{0.*1.1869684937437432*cos(\t r)+1.*1.1869684937437432*sin(\t r)});
        \begin{scriptsize}
        \draw [fill=black] (-0.6409832092890928,-0.040912749529949305) circle (2.5pt);
        \draw [fill=black] (-0.1031736826173693,0.4968967771417755) circle (2.5pt);
        \end{scriptsize}
        \end{tikzpicture}
        \end{adjustbox}
    \caption{The dashed line extends the given line segment.  Note the perpendicular intersections of the dashed line with the boundary.}
    \label{fig.Poincare2}
  \end{minipage}
   \hfill
  \begin{minipage}[t]{0.3\textwidth}
       \begin{adjustbox}{width=\textwidth} 
       \begin{tikzpicture}[line cap=round,line join=round,>=triangle 45,x=2.0cm,y=2.0cm]
        \clip(-1.023968781312896,-1.0228225671654467) rectangle (1.0417087188580425,1.0428549330054961);
        \draw(0.,0.) circle (2.cm);
        \draw [shift={(1.1857279038427406,-0.2444638054990037)}] plot[domain=2.4254359826530285:3.1836383610848906,variable=\t]({1.*0.6824318384647315*cos(\t r)+0.*0.6824318384647315*sin(\t r)},{0.*0.6824318384647315*cos(\t r)+1.*0.6824318384647315*sin(\t r)});
        \draw [dash pattern=on 4pt off 4pt] (0.335518150312748,-0.18187435463078386) circle (1.0218822832541252cm);
        \begin{scriptsize}
        \draw [fill=black] (0.503899192186319,-0.27314868150183047) circle (2.5pt);
        \draw [fill=black] (0.6709460906222331,0.2035461262299256) circle (2.5pt);
        \end{scriptsize}
        \end{tikzpicture}
        \end{adjustbox}
    \caption{The dashed circle has the given segment as a radius.  Note that the center of the circle in the \Poincare\ disk is different from the center of the circle as drawn on the plane.}
    \label{fig.Poincare3}
  \end{minipage}
\end{figure}

\begin{itemize}
    \item Given two points, we can imagine any line passing through the first point - now rotate that line until it also passes through the second point.  Thus the \Poincare\ disk satisfies postulate 1 (Fig. \ref{fig.Poincare1}.
    
    \item Any line segment is, of necessity, simply a part of a complete line - just extend the ends until the boundary is reached (postulate 2, Fig. \ref{fig.Poincare2}).
   
    \item Given a line segment, we want to produce a circle with that segment as its radius.  This one is a bit of a rabbit-hole, whose investigation provokes many more questions:
    \begin{itemize}
        \item What does a circle look like in the \Poincare\ disk? 
        \item What do we mean by distance?  
        \item What can we say about angles? and so on...  
    \end{itemize}
    Here, the argument will depend on an extremely important (and not obvious) property of the \Poincare\ disk, that it \emph{faithfully represents angles} (it is a ``conformal model'').  If two lines in the \Poincare\ disk intersect in what appears to us to be right angles, then they do, in fact, intersect in right angles (under the correct definition of angle interpreted in the \Poincare\ disk). This extends also to circles, so if we draw a circle inside the \Poincare\ disk, it will also satisfy the definition of circle interpreted within the model (``the set of all points a given distance from the center'') -- circles look like circles, although the center of the circle (from our perspective) may not be the same as the center of the circle (defined inside the model).  Nonetheless, given any line segment inside the \Poincare\ disk, we can indeed construct a circle with that segment as one of its radii (postulate 3, Fig. \ref{fig.Poincare3}).  
 
    \item As a conformal model, the \Poincare\ disk faithfully represents angles - and so all right angles are congruent (postulate 4, Fig. \ref{fig.Poincare4}).
    
    \item Given a line and a point not on the line, we can easily construct many different lines, passing through the point, and parallel to the first line (recall our definition of parallel - they ``do not meet'').  Thus in the \Poincare\ disk the parallel postulate is false (Fig. \ref{fig.Poincare5}).
    
\end{itemize}
\qed
\end{proof}

\begin{figure}[ht]
  \centering
  \begin{minipage}[t]{0.3\textwidth}
       \begin{adjustbox}{width=\textwidth} 
        \begin{tikzpicture}[line cap=round,line join=round,>=triangle 45,x=2.0cm,y=2.0cm]
        \clip(-1.023968781312896,-1.0228225671654467) rectangle (1.0417087188580425,1.0428549330054961);
        \draw(0.,0.) circle (2.cm);
        \draw [shift={(1.1857279038427406,-0.2444638054990037)}] plot[domain=2.4254359826530285:3.1836383610848906,variable=\t]({1.*0.6824318384647315*cos(\t r)+0.*0.6824318384647315*sin(\t r)},{0.*0.6824318384647315*cos(\t r)+1.*0.6824318384647315*sin(\t r)});
        \draw [dash pattern=on 4pt off 4pt] (0.335518150312748,-0.18187435463078386) circle (1.0218822832541252cm);
        \draw [shift={(1.681350082586509,0.6698500184794387)}] plot[domain=3.37924910127298:4.032400442754742,variable=\t]({1.*1.5085215104434422*cos(\t r)+0.*1.5085215104434422*sin(\t r)},{0.*1.5085215104434422*cos(\t r)+1.*1.5085215104434422*sin(\t r)});
        \draw [shift={(0.37640460676569837,-1.737487141260395)}] plot[domain=1.2625313611818774:1.9316155602031024,variable=\t]({1.*1.4698782242211978*cos(\t r)+0.*1.4698782242211978*sin(\t r)},{0.*1.4698782242211978*cos(\t r)+1.*1.4698782242211978*sin(\t r)});
        \draw [shift={(1.1857279038427355,-0.24446380549900493)}] plot[domain=3.1836383610848893:3.710781470144596,variable=\t]({1.*0.6824318384647264*cos(\t r)+0.*0.6824318384647264*sin(\t r)},{0.*0.6824318384647264*cos(\t r)+1.*0.6824318384647264*sin(\t r)});
        \begin{scriptsize}
        \draw [fill=black] (0.503899192186319,-0.27314868150183047) circle (2.5pt);
        \draw [fill=black] (0.6709460906222331,0.2035461262299256) circle (2.5pt);
        \end{scriptsize}
        \end{tikzpicture}
        \end{adjustbox}
    \caption{Same as Fig. \ref{fig.Poincare3} with additional radii shown.}
    \label{fig.Poincare3a}
  \end{minipage}
  \hfill
  \begin{minipage}[t]{0.3\textwidth}
       \begin{adjustbox}{width=\textwidth} 
        \begin{tikzpicture}[line cap=round,line join=round,>=triangle 45,x=2.0cm,y=2.0cm]
        \clip(-1.0299745523116648,-1.0166787720330333) rectangle (1.027642721156404,1.0316490329318073);
        \draw[fill=black,fill opacity=0.1] (-0.4769359849096636,-0.17507786692982913) -- (-0.41281371094678176,-0.1608284727158551) -- (-0.42706310516075574,-0.09670619875297322) -- (-0.49118537912363763,-0.11095559296694724) -- cycle; 
        \draw[fill=black,fill opacity=0.1] (0.058335465011074165,0.4846009222934291) -- (0.08052860521628462,0.546424670007944) -- (0.01870485750176973,0.5686178102131545) -- (-0.003488282703440729,0.5067940624986396) -- cycle; 
        \draw(0.,0.) circle (2.cm);
        \draw [shift={(-1.1996359678979487,-0.3383647586527364)}] plot[domain=-0.1443228843309834:0.3106053481078739,variable=\t]({1.*0.7440545446219482*cos(\t r)+0.*0.7440545446219482*sin(\t r)},{0.*0.7440545446219482*cos(\t r)+1.*0.7440545446219482*sin(\t r)});
        \draw [shift={(-3.055798566768707,7.87861561576776)}] plot[domain=5.022994328492556:5.075006591159147,variable=\t]({1.*8.391095810541408*cos(\t r)+0.*8.391095810541408*sin(\t r)},{0.*8.391095810541408*cos(\t r)+1.*8.391095810541408*sin(\t r)});
        \draw [shift={(0.3330344374765665,1.2422953617765313)}] plot[domain=4.283280871417195:4.807556751302325,variable=\t]({1.*0.8088323079828218*cos(\t r)+0.*0.8088323079828218*sin(\t r)},{0.*0.8088323079828218*cos(\t r)+1.*0.8088323079828218*sin(\t r)});
        \draw [shift={(-1.5821823103136188,1.229112852451698)}] plot[domain=5.854077198212091:6.0330325540846,variable=\t]({1.*1.7360931043959875*cos(\t r)+0.*1.7360931043959875*sin(\t r)},{0.*1.7360931043959875*cos(\t r)+1.*1.7360931043959875*sin(\t r)});
        \end{tikzpicture}
        \end{adjustbox}
    \caption{Right angles are congruent.}
    \label{fig.Poincare4}
  \end{minipage}
  \hfill
  \begin{minipage}[t]{0.3\textwidth}
       \begin{adjustbox}{width=\textwidth} 
        \begin{tikzpicture}[line cap=round,line join=round,>=triangle 45,x=2.0cm,y=2.0cm]
        \clip(-1.023968781312896,-1.0228225671654467) rectangle (1.041708718858042,1.0428549330054961);
        \draw(0.,0.) circle (2.cm);
        \draw [shift={(-1.930841600410906,-1.8998901469359994)}] plot[domain=0.39920948262639655:1.155427644302598,variable=\t]({1.*2.5174852246441963*cos(\t r)+0.*2.5174852246441963*sin(\t r)},{0.*2.5174852246441963*cos(\t r)+1.*2.5174852246441963*sin(\t r)});
        \draw [shift={(0.42019776409460263,1.520711250499339)},dash pattern=on 4pt off 4pt]  plot[domain=3.7562975883669623:5.129300708328169,variable=\t]({1.*1.2202986799736226*cos(\t r)+0.*1.2202986799736226*sin(\t r)},{0.*1.2202986799736226*cos(\t r)+1.*1.2202986799736226*sin(\t r)});
        \draw [shift={(1.276096647740023,0.46944832379988627)},dash pattern=on 4pt off 4pt]  plot[domain=2.6677711238703825:4.320439097192541,variable=\t]({1.*0.9213058032444205*cos(\t r)+0.*0.9213058032444205*sin(\t r)},{0.*0.9213058032444205*cos(\t r)+1.*0.9213058032444205*sin(\t r)});
        \draw [shift={(0.7277975402965705,1.142899984284227)},dash pattern=on 4pt off 4pt]  plot[domain=3.3152029677827315:4.975496960567746,variable=\t]({1.*0.9142809380811917*cos(\t r)+0.*0.9142809380811917*sin(\t r)},{0.*0.9142809380811917*cos(\t r)+1.*0.9142809380811917*sin(\t r)});
        \begin{scriptsize}
        \draw [fill=black] (0.370240086501367,0.30143560537439174) circle (2.5pt);
        \end{scriptsize}
        \end{tikzpicture}
        \end{adjustbox}
    \caption{Each dashed line is parallel to the given line and passes through the given point.}
    \label{fig.Poincare5}
  \end{minipage}
\end{figure}

Taking stock, we have now exhibited  models in which Euclid's postulates 1-4 hold and 
\begin{itemize}
\item {
    the parallel postulate holds (the Euclidean plane).
    }
\item {
    the parallel postulate fails (the \Poincare\ disk).
    }
\end{itemize}

Thus the parallel postulate is independent of postulates 1-4. 
\qed
\end{proof}

\subsubsection{Multiplicity in geometry}
\label{Subsubsection.MultiplicityInGeometry}

Where does this leave us?  We have considered two very different models, both satisfying the basic principles of geometry, but disagreeing about fundamental geometric properties. This is fascinating and provocative!  

A useful test question in this situation might be:

\begin{question}
    Is there a compelling reason to accept one model of geometry over the other?
\end{question}

Suppose the independence of the parallel postulate had been discovered in Euclid's time: one can imagine a prevalent argument that we should simply adopt the parallel postulate as a new axiom - after all, it holds in the familiar Euclidean plane, the ideal model of the universe.  This perspective (analagous to the orthodox view of set theory) holds implicit the notion that there is a ``right'' geometry, and our goal is to ferret out its properties.  The history of developments in geometry and physics, however, give this the air of a cautionary tale, suggesting that too rigid an attachment to this ideal may be dangerously limiting.  The embracement of multiplicity in geometry has been a fruitful one both in pure mathematics and in related sciences, and the investigation of alternative geometries has both informed and been informed by our our deepening understanding of the geometry of space-time. Our best understanding of the true geometry of the space is that it is neither that of the Euclidean plane nor of the \Poincare\ disk, but something entirely more complex.

From this perspective, a different natural question arises:

\begin{question}
    Can we find still more models of geometry, with diverse and interesting properties?
\end{question}

\section{Independence in Set Theory}
\label{Section.IndependenceinSetTheory}

Let us turn out attention back to set theory.  Here we have an underlying set of axioms, the \ZFC\ axioms (analogous to Euclid's postulates 1-4), and a general (though by no means universal) agreement among mathematicians that it these are the ``right'' axioms, in the sense that they correctly capture our basic intuitions about sets.  Taking these axioms as our background theory, what further facts can we prove about sets?

An instructive example comes from the earliest days of set theory, when Cantor, introducing the notion of cardinality, demonstrated that the size of the real numbers is strictly larger than the size of the natural numbers.  He then asked the obvious next question, ``Are there any sizes in between?''  This question (in the case of a negative answer, it is the continuum hypothesis), is one of the most famous in set theory.  Much as in the case of the parallel postulate, it was widely believed that the continuum hypothesis could simply be proven from \ZFC, and Cantor and many others devoted enormous time and effort to developing such a proof.  It was not until much later  that the combined efforts of \Godel\ and Cohen established once and for all: 

\begin{theorem}[\Godel, Cohen]
The continuum hypothesis is independent of \ZFC.
\end{theorem}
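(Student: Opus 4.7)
The plan is to apply the template laid out in Corollary \ref{Corollary.ProvingIndependence}, taking $\scrA$ to be the \ZFC\ axioms and $P$ to be the continuum hypothesis. The task then reduces to exhibiting two models of \ZFC: one in which the continuum hypothesis holds, and one in which it fails. This exactly mirrors the structure of the parallel postulate proof given earlier, with \ZFC\ playing the role of Euclid's postulates 1--4 and CH playing the role of the parallel postulate.

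For the first half --- a model of \ZFC\ in which CH holds --- I would turn to \Godel's constructible universe $L$, built by transfinite iteration of the definable power set operation along the ordinals. Verifying that $L$ satisfies \ZFC\ proceeds axiom by axiom, with the axiom of choice following from the fact that $L$ carries a canonical definable well-order. CH (indeed, the stronger generalized continuum hypothesis) then falls out of a condensation argument: every real of $L$ appears at some countable stage of the construction, yielding at most $\aleph_1$-many reals in $L$.

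For the second half --- a model of \ZFC\ in which CH fails --- I would invoke Cohen's forcing construction. Starting with a countable transitive model $M$ of \ZFC, consider the poset of finite partial functions from $\aleph_2^M \times \omega$ into $\{0,1\}$, ordered by reverse inclusion. A filter $G$ generic over $M$ meets every dense set in $M$, and the extension $M[G]$ is a model of \ZFC\ in which $G$ codes $\aleph_2^M$-many distinct new real numbers. A chain-condition argument on the poset ensures that the cardinal $\aleph_2^M$ is preserved in $M[G]$, so that the continuum in $M[G]$ has size at least $\aleph_2$, violating CH.

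The main obstacle is the second model. The constructible universe, while subtle, is an internal construction built directly from the axioms themselves; most of the technical content sits in the condensation lemma and is a relatively self-contained piece of inner model theory. Cohen's method, by contrast, demands an entirely new technology --- the forcing language, the forcing relation $p \Vdash \varphi$, the truth and definability lemmas, and the notion of genericity --- a machinery that has no real analogue in the geometric example above and whose proper development occupies a substantial portion of any graduate course in set theory. It is precisely this technology, and the staggering variety of universes it produces, that motivates the discussion of forcing and the multiverse perspective in the remainder of this paper.
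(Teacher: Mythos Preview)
Your proposal is correct and follows essentially the same approach as the paper: invoke Corollary \ref{Corollary.ProvingIndependence} with $\scrA=\ZFC$ and $P=\text{CH}$, use \Godel's constructible universe $L$ for the model where CH holds, and use Cohen's forcing for the model where CH fails. The paper treats this theorem at a purely expository level and supplies far less technical detail than you do (no mention of condensation, the specific poset, or the chain-condition argument), but the overall architecture is identical.
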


In the past century of the study of the \ZFC\ axioms, the question of independence has attained a place of central importance.

\begin{example}[Examples of independence in set theory]
\begin{itemize}
    \item {The continuum hypothesis is independent of \ZFC.}
    \item {The axiom of choice is independent of the Zermelo-Frankel axioms.}
    \item {The existence of inaccessible cardinals, measurable cardinals, and all of the other large cardinal notions, are independent of \ZFC\ (provided they are consistent).}
    \item {Many, many combinatorial and other set-theoretic statements are independent of \ZFC.}
\end{itemize}
\end{example}

\subsection{Building set-theoretic universes}
\label{Subsection.BuildingUniverses}

How are independence results in set theory carried out? We can use our experience with Euclid's postulates as a recipe.

Recipe for building models of alternative geometries:
\begin{itemize}
\item {Begin with a structure we know (e.g. the unit disk).}
\item {Provide alternative definitions of the primitive notions (point, line).}
\item {Verify that the resulting structure satisfies the axioms.}
\end{itemize}

The situation in set theory is analogous.  While there is only one primitive notion (sets), it is harder to begin with an existing structure without biting our own tail, as all of our existing structures such as the unit disk, Euclidean space, and so on are, themselves, already realized as sets.  Furthermore,  given a proposed new structure, the prospect of verifying the axioms is daunting - the axiom system \ZFC\ is incredibly rich, consisting of seven axioms together with two infinite lists of axioms (``axiom schemes'').  This clearly cannot be approached piecemeal, one axiom at a time!

Nonetheless, there are several strategies for constructing new models of set theory.  Of these, one method stands out from the rest for its flexibility: the technique of \emph{forcing}. Invented in 1964 by Paul Cohen, forcing is a method for building a new ``universe of sets,'' or model of \ZFC, out of an existing universe $V$, by selecting a partially-ordered set $\P \in V$ and adjoining a new ideal object called a generic set $G \subset \P$, resulting in the forcing extension $V[G]$ (Fig. \ref{fig.Forcing1} and \ref{fig.Forcing2}).

\begin{figure}[ht]
  \centering
  \begin{minipage}[t]{0.4\textwidth}
        \begin{tikzpicture}[line cap=round,line join=round,>=triangle 45,x=1.0cm,y=1.0cm]
        \clip(0,0) rectangle (4,4);
        \draw (1.,3.5)-- (2.,0.5);
        \draw (2.,0.5)-- (3.,3.5);
        \draw [dash pattern=on 2pt off 2pt] (3.,3.5)-- (1.,3.5);
        \draw (3.1,3.75) node {V};
        \draw (2.2,2.3) node {$\mathbb{P}$};
        \begin{scriptsize}
        \draw [fill=black] (1.9,2.1) circle (2.5pt);
        \end{scriptsize}
        \end{tikzpicture}
    \caption{Select a partial order $\P$.}
    \label{fig.Forcing1}
  \end{minipage}
  \hfill
  \begin{minipage}[t]{0.4\textwidth}
        \begin{tikzpicture}[line cap=round,line join=round,>=triangle 45,x=1.0cm,y=1.0cm]
        \clip(0,0) rectangle (4.7,4);
        \draw (1.,3.5)-- (2.,0.5);
        \draw (2.,0.5)-- (3.,3.5);
        \draw (2.2,2.3) node {$\mathbb{P}$};
        \draw (0.2,3.5)-- (2.,0.5);
        \draw (2.,0.5)-- (3.8,3.5);
        \draw [dash pattern=on 2pt off 2pt] (0.2,3.5)-- (3.8,3.5);
        \draw (3.7,3.7) node[anchor=west] {V[G]};
        \draw (2.85,2.35) node {G};
        \begin{scriptsize}
        \draw [fill=black] (1.9,2.1) circle (2.5pt);
        \draw [fill=black] (2.7,2.1) circle (2.5pt);
        \end{scriptsize}
        \end{tikzpicture}
    \caption{Adjoin the generic set $G$ to build the extension $V[G]$.}
    \label{fig.Forcing2}
  \end{minipage}
\end{figure}

By controlling the properties of the generic set $G$, we can control the properties of the resulting extension $V[G]$, and forcing provides a concrete method for exerting fine control over $G$ via the partial order $\P$. A typical independence result in set theory might proceed as follows:  

Starting in the universe of sets $V$, we build
\begin{itemize}
\item {an extension $V[G_1]$ satisfying the \ZFC\ axioms, in which the continuum hypothesis holds, and}
\item {another extension $V[G_2]$ satisfying the \ZFC\ axioms, in which the continuum hypothesis fails.} 
\end{itemize}
Thus we demonstrate the independence of the continuum hypothesis from the \ZFC\ axioms.

In practice, it is often the case that one of the two parts of such a proof can be accomplished without forcing (as was historically the case with the continuum hypothesis, as \Godel's constructible universe provided a model in which it was true, well before the development of forcing).  But in nearly every independence proof in set theory, forcing is indispensable for at least one of the two parts.

\subsection{A profusion of set theories}
\label{Subsection.Multiverse}

Fifty years of forcing has demonstrated the surprising efficacy of the method in producing diverse models of set theory.  Each such universe of sets exhibits its own distinct mathematical truths.  

\begin{question}
    Is there a compelling reason to accept one of these models of set theory over the others?
\end{question}

This question is a contentious one in modern set theory, with a number of competing views and active research programs supporting them (an excellent informal account of these views is given in Hamkins' response, on MathOverflow, to the question ``What is the Current Status of the Continuum Hypothesis?'' \cite{HamkinsMO:StatusofCH}).  However, no model of set theory yet produced has gained sufficient support within the community to be readily accepted as the ``right model.''  Indeed, not even a single additional axiom has gained serious consideration by the set theory community as a natural addition to \ZFC.  

Let us consider a typical day in the life of a hypothetical modern set theorist - arising in the morning, she might posit a model of set theory $V$ with certain properties, move to a forcing extension $V[G]$, and then to an inner model $W$ of $V[G]$ with some desirable properties of interest (established by carefully tracing the relationships between the models $V$, $V[G]$ and $W$).  This kind of ready movement between different universes of set theory has become familiar, even comfortable, and fails to provoke any philosophical discomfort on the part of the set theorist.  The orthodox view of set theory seems ill-equipped to frame the apparent ease with which we travel between universes - a new paradigm is suggested, exemplified by the multiverse view, as introduced by Hamkins:

\begin{definition}[Multiverse View \cite{HamkinsMO:StatusofCH}, \cite{Hamkins2012:TheSet-TheoreticMultiverse}]
This is the view ... that we do not have just one concept of set leading to a unique set-theoretic universe, but rather a complex variety of set concepts leading to many different set-theoretic worlds.
\end{definition}

\subsection{Exploring the multiverse}
\label{Subsection.MultiverseisSecondOrder}

If we take the multiverse view seriously, how are we to go about understanding this profusion of set theories?  An obvious approach is to reject the study of a universe of sets, and instead direct our attention to studying the set-theoretic multiverse itself.  Taking the multiverse as our object of study presents a challenge, however - as a structure, it is second-order in nature, consisting not simply of primitive objects (``sets'') but also of proper classes of those objects (``universes'').  Our carefully honed tools of first-order set theory are insufficient to the task in its full generality.

Why not simply proceed with the project, second-order logic and all?  Educated as I was in the orthodoxy of first-order set theory, I find this question surprising - I absorbed rather early the notion that ``the second-order'' was difficult, unmanageable and probably dangerous, largely due to the practical limitations (including, but not limited to: the failure of compactness, of completeness, and of the \Lowenheim-Skolem theorem).  I note with some interest (and self-reflective amusement) that, in fact, the debate around second-order logic is alive and ongoing, much more so among philosophers than mathematicians (see, e.g., Simons \cite{Simons1993-SIMWAO-2}).  Nonetheless, we will leave that can of worms half-open for now, in light of the following observation.

It turns out that in certain cases an interesting local neighborhood of the multiverse may be first-order accessible.  The area of set theory known as set-theoretic geology studies just such a local neighborhood.

\section{Set-theoretic geology}
\label{Section.Geology}

Set-theoretic geology is an area of set theory introduced by Gunter Fuchs, Joel David Hamkins, and me in \cite{FuchsHamkinsReitz2015:Set-theoreticGeology}.  Ultimately, it is about forcing, and forcing is a ubiquitious and powerful tool for navigating about the multiverse.  However, the sense in which forcing might give rise to a ``first-order accessible'' neighborhood of the multiverse is not at all obvious - indeed, when we consider all those universes $V[G]$ that might be accessible from our own via forcing, we are already transcending our given domain of discourse (as the set $G$ is not in our universe $V$) and thus transcending the first-order descriptive power at our disposal.  To understand the first-order nature of the geology project requires a different approach.  At its heart, forcing is a method that allows us to move from a model $V$ (the ground model) to a larger model $V[G]$ (the forcing extension). However, a change in perspective allows us to use forcing to look inward.  

\begin{definition}\cite{FuchsHamkinsReitz2015:Set-theoreticGeology}
A class $W\subset V$ is a \emph{ground} of $V$, if $W$ is a transitive class model of \ZFC\ and $V$ is obtained by set forcing over $W$, that is, if there is some forcing notion $\P\in W$ and a $W$-generic filter $G\of\P$
such that $V=W[G]$.
\end{definition}

For a given model $V$, the collection of all its grounds forms the context for \emph{set-theoretic geology}.  This second-order collection, consisting of many proper classes $W$, nonetheless admits a first-order definition. Set-theoretic geology provides first-order access to the multiverse of grounds.  In the following section I will give a brief tour of this neighborhood, point out some of the interesting features, and pose a number of questions.


\subsection{Geology is first-order}
\label{Subsection.GeologyisFirstOrder}

In set-theory, our objects are \emph{sets}.  The idea that a collection of sets is, again, a set, is a natural and attractive notion - but as Russell showed us with his famous paradox, pursuing this notion to its logical conclusion (``unrestricted comprehension'') leads us to contradiction.  Thus we have what amounts to a size restriction on sets - collections of sets that are simply too big are disallowed.  The universe $V$ is one such collection, the collection of all ordinals is another, and so on.  While these collections are not part of our formal universe of discourse - we cannot reference them directly, our quantifiers do not range over them, etc. - we can and do refer to them informally as \emph{proper classes}.  

We also have a more formal method of accessing many proper classes, by using the expressive power of first-order logic. For any first order formula $\phi(x)$ with a free variable, the collection $\{ a\in V \mid \phi(a) \}$ is a definable class.  Since many different interesting properties can be expressed by first-order formulas, we can (on a case-by-case basis) work with many interesting proper classes, all without leaving the realm of first-order logic.

\begin{example}[Some examples of definable classes]
\begin{itemize}
    \item The universe $V$, defined by $\phi(x): x=x$
    \item The class of all of sets that contain the empty set as a member, $\phi(x): \emptyset \in x$
    \item The class of all ordinals (here the definition is a little trickier, and there are several equivalent versions - one is $\phi(x): x$ is a transitive set of transitive sets)
    \item The class of all groups (or fields, or rings, etc)
    \item The class of all models of the parallel postulate.
\end{itemize}
\end{example}
In some cases, the first-order definition comes readily to mind - in others, it does not seem so obvious.  In some cases, we may have a class in mind but, absent a definition, we do not know whether we can address it in a first-order fashion. In the case of set-theoretic geology the classes we are interested in are the grounds of $V$, and the first-order definability of these classes was not established until some fifty years after forcing was invented.

\begin{theorem}[Laver
\cite{Laver2007:CertainVeryLargeCardinalsNotCreated}, indep. Woodin
\cite{Woodin2004:RecentDevelopmentsOnCH}
\cite{Woodin2004:CHMultiverseOmegaConjecture}]
If $W$ is a ground of $V$ then $W$ is a first-order definable class in $V$ (using a parameter from $W$).  
\end{theorem}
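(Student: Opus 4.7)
The plan is to use forcing itself, together with a key combinatorial property enjoyed by grounds, to pin $W$ down uniquely inside $V$ from a small amount of information. Suppose $V=W[G]$ where $G$ is $W$-generic for some $\P \in W$, and set $\delta=(|\P|^+)^W$. The parameter witnessing definability will be $P=(H_{\delta^+})^W$, a set lying in $W$.

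First I would establish that the inclusion $W\subseteq V$ satisfies the $\delta$-cover and $\delta$-approximation properties. The cover property---every set of ordinals in $V$ of size less than $\delta$ is contained in a set of $W$ of size less than $\delta$---follows from a chain-condition argument on $\P$ in $W$, since one can capture any small $V$-set of ordinals inside the union of the (boundedly many) conditions forcing its members to lie in the candidate set. The approximation property---any set of ordinals $x\in V$ lies in $W$ provided every ``small'' piece $x\cap z$ with $z\in W$ and $|z|^W<\delta$ lies in $W$---is deeper, requiring a careful analysis of nice names and the forcing relation.

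Next comes the heart of the matter: a uniqueness theorem asserting that if $W_1$ and $W_2$ are transitive class models of \ZFC, both satisfying the $\delta$-cover and $\delta$-approximation properties over $V$, and $(H_{\delta^+})^{W_1}=(H_{\delta^+})^{W_2}$, then $W_1=W_2$. The intuition is that approximation forces every set of ordinals in either model to be reconstructible from its small pieces, each of which lives in the common parameter $P$. I expect this uniqueness lemma to be the principal obstacle: it takes real combinatorial bookkeeping to show the two models agree on every set of ordinals, after which standard coding (every set is coded by a set of ordinals via its transitive closure) upgrades this to full agreement.

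With uniqueness in hand, the definition of $W$ in $V$ becomes first-order in the parameter $P$: a set $x$ belongs to $W$ exactly when its small approximations conform to the reconstruction procedure determined by $P$ together with the cover and approximation properties. A priori the natural formulation mentions a class quantifier (``the unique transitive class model $M$ of \ZFC\ with the $\delta$-cover and $\delta$-approximation properties over $V$ and $(H_{\delta^+})^M=P$''), but uniqueness collapses this: the value of any such $M$ on every rank-initial segment $V_\alpha$ can be read off from local first-order data in $V$ using $P$, yielding an honest first-order formula $\phi(x,P)$ that defines $W$ in $V$, as required.
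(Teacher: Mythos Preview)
Your outline is essentially the standard Laver--Hamkins argument via the $\delta$-cover and $\delta$-approximation properties, and it is correct in its broad strokes. Note, however, that the paper does not actually prove this theorem: immediately after the statement it declares that ``the technical details of the proof are beyond the scope of this article,'' and instead offers only commentary on what the proof yields---in particular, that the defining formula $\phi(x,r)$ is \emph{uniform}, the same formula working for every ground with only the parameter varying. So there is no proof in the paper to compare against.

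That said, your sketch delivers exactly the uniformity the paper highlights: the parameter $P=(H_{\delta^+})^W$ together with the single first-order formula arising from the uniqueness lemma defines any ground, so quantifying over grounds reduces to quantifying over sets. Two small points worth tightening if you flesh this out: first, the $\delta$-approximation property for an arbitrary set-forcing extension is not quite a matter of ``nice names and the forcing relation'' alone---the clean argument factors $\P$ as a two-step iteration to reduce to the closure-point case, or argues directly via a density argument showing that any new set of ordinals has a $\delta$-small approximation not in $W$; second, the uniqueness theorem requires $\delta$ to be a regular cardinal (in $V$), which you should note follows from the $|\P|^+$-chain condition. With those in place, your plan is exactly the argument in the cited Laver and Hamkins papers.
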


The technical details of the proof are beyond the scope of this article, but a few comments about it are in order.  First, the proof actually provides a stronger result than is stated in the theorem.  To understand the importance of this, it's worth considering the problem we are facing - we want to do more than reference a particular ground, we are interested in the study of the collection of \emph{all} the grounds. Knowing only that each ground was first-order definable, we would be able to express statements about \emph{particular} grounds, such as ``the intersection of the grounds $W_1$ and $W_2$ is equal to the ground $W_3$'' (by simply replacing each $W_i$ with the appropriate defining formula).  However, this would not allow us to treat the collection of grounds in a first-order fashion, making statements that (for example) quantified across grounds, such as ``there exists a ground $W$ in which the continuum hypothesis holds'' - exactly the kind of question we want to address in set-theoretic geology.

Luckily, the proof of the theorem does more than establish first-order definability of grounds - a specific first-order formula $\phi(x,r)$ is given in the proof, where $r$ is a set parameter taken from $W$, and so (working in $V$) we have $W = \{ a \in V \mid \phi(a,r) \}$.  The beauty and power of this definition is that it is \emph{uniform} - the same first-order formula will work to define any ground $W$, with the distinction appearing only in our choice of parameter $r$.  Thus, for two different grounds $W_1$ and $W_2$, we can select parameters $r_1 \in W_1$ and $r_2 \in W_2$ such that $W_1$ is defined by $\phi(x,r_1)$ and $W_2$ is defined by $\phi(x,r_2)$.   The uniform nature of the definition allows us to effectively quantify over grounds, by simply quantifying over the parameters $r$ (which, as the parameters are sets, presents no first-order challenges).  Careful examination of the formula $\phi(x,r)$ shows that we can associate to each parameter $r$ a ground $W_r$, and that every ground has a corresponding parameter - this is captured in the following theorem, which establishes the formal foundations for set-theoretic geology.

\begin{theorem}\label{Theorem.ParameterizedGroundsW_r}\cite{FuchsHamkinsReitz2015:Set-theoreticGeology}
There is a parameterized family $\set{W_r\mid r\in V}$ of
classes such that
\begin{enumerate}
 \item Every $W_r$ is a ground of $V$, and $r\in W_r$.
 \item Every ground of $V$ is $W_r$ for some $r$.
 \item The classes $W_r$ are uniformly definable in the
     sense that $\set{\<r,x>\mid x\in W_r}$ is first
     order definable without parameters.
 \item The relation ``$V=W_r[G]$, where $G\of\P\in W_r$
     is $W_r$-generic'' is first-order expressible in
     $V$ in the arguments $(r,G,\P)$.
\end{enumerate}
\end{theorem}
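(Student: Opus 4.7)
The plan is to build on the Laver--Woodin ground definability theorem cited just above. A careful reading of its proof yields not merely a defining formula for each particular ground, but a single first-order formula $\phi(x,r)$, depending uniformly on a set parameter $r$, such that for every ground $W$ of $V$ there exists a parameter $r\in W$ with $W=\{x\mid \phi(x,r)\}$. The canonical choice of $r$, in the proof, is roughly $(V_\gamma)^W$ for $\gamma$ sufficiently large above the forcing used to pass from $W$ to $V$; the essential feature is that $r$ encodes enough of $W$ to permit the uniform reconstruction of $W$ via $\phi$.

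First I would invoke this uniform formula and tentatively set $W_r := \{x \mid \phi(x,r)\}$ for each $r\in V$. This provisional definition immediately delivers parts (2) and (3): every ground arises as some $W_r$ by the theorem, and the uniform definability of the relation ``$x\in W_r$'' is built in because $\phi$ itself does not vary with $r$.

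The principal obstacle is part (1): for an arbitrary $r\in V$, the class $\{x\mid \phi(x,r)\}$ need not be a ground at all, nor need it contain $r$. To handle this I would insert a first-order \emph{validity check} on $r$, demanding that the class $\{x\mid\phi(x,r)\}$ be transitive, contain $r$, and admit $V$ as a set-generic extension by some poset $\P$ and filter $G$ lying in $V$. If $r$ passes the check, keep $W_r:=\{x\mid \phi(x,r)\}$; otherwise default to $W_r := V$, which is a trivial ground of itself and automatically contains $r$. Transitivity and the membership of $r$ are first-order on their face; the assertion ``$V$ is a set-generic extension of $\{x\mid\phi(x,r)\}$'' quantifies only over sets $\P$ and $G$ in $V$, hence is first-order as well. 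Crucially, this single condition subsumes the requirement that $W_r\models\ZFC$: by the standard intertranslatability of grounds and forcing extensions, any transitive inner class $W$ with $V=W[G]$ for $G$ a $W$-generic filter on some $\P\in W$ automatically satisfies the $\ZFC$ axioms whenever $V$ does. Defaulting to $V$ on invalid parameters does not disturb parts (2) and (3), since every $r$ coming from a genuine ground already passes the check.

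Part (4) then follows from the same unfolding: with $W_r$ now uniformly definable from the parameter $r$, the statement ``$\P\in W_r$, $G\of\P$ is $W_r$-generic, and $V=W_r[G]$'' translates into a conjunction of first-order assertions about dense subsets of $\P$ in $W_r$ and evaluations of $W_r$-names by $G$. The remaining effort here is bookkeeping rather than conceptual: writing out forcing relative to the definable class $W_r$ so that each clause is manifestly first-order in the arguments $(r,G,\P)$ --- a task to which the machinery developed in the proof of Laver--Woodin is already well-suited.
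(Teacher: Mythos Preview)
The paper does not actually prove this theorem: it cites \cite{FuchsHamkinsReitz2015:Set-theoreticGeology} for the result and offers only the informal discussion preceding the statement, whose content is precisely the strategy you outline --- extract the uniform formula $\phi(x,r)$ from the Laver--Woodin argument, then observe that each parameter $r$ determines a ground $W_r$ and every ground arises this way. Your proposal is a faithful and somewhat more detailed elaboration of that sketch, including the natural ``default to $V$'' trick for bad parameters, which is indeed how the cited source handles part~(1).

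One step deserves more care. You write that the validity check ``$V$ is a set-generic extension of $\{x\mid\phi(x,r)\}$'' automatically subsumes the requirement that this class model \ZFC, on the grounds that any transitive inner class $W$ with $V=W[G]$ inherits \ZFC\ from $V$. But the expressions ``$W$-generic'' and ``$W[G]$'' presuppose that $W$ already satisfies enough of \ZFC\ for the forcing machinery (names, the forcing relation, dense sets) to make sense; absent that, the condition you are checking has no determinate content, so the argument is circular as stated. In the source the issue is handled not by a post-hoc check but by building $\phi$ from the $\delta$-approximation and cover characterization, so that for \emph{every} parameter the resulting class is provably a \ZFC\ model (or $V$ itself). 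Your outline is otherwise sound, but this particular shortcut should be replaced by that more careful construction.
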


\subsection{A brief tour of geology}
\label{Subsection.TourofGeology}

I think of Theorem \ref{Theorem.ParameterizedGroundsW_r} as a tool that situates set-theoretic geology concretely in the realm of first-order set theory.  What can we do with this tool? What are the questions we can ask, the objects we can construct, the consequences we can explore?  Let's consider some of the possibilities.

If we are interested in the collection of grounds of $V$,  a good place to start is the simplest case possible - what if there are no grounds of $V$?  This is the question that I considered in my dissertation work under Joel David Hamkins \cite{Reitz2006:Dissertation}, focusing on the ground axiom (the precursor to set-theoretic geology).

\begin{definition}[Hamkins, R.]
The \emph{ground axiom} is the assertion that the
universe $V$ is not obtained by set forcing over any
strictly smaller ground model.
\end{definition}

It is worth mentioning here one of the philosophical notions that hovers in the background of the geology project, the search for canonical inner models.  The notion of peering inside a model of set theory, looking for some essential core structure, is a theme that goes back all the way to \Godel's constructible universe and is actively pursued today.  The method of forcing seems to run counter to this inward pull, littering the landscape of our model $V$ with highly non-canonical generic objects $G$.  We can imagine, then, that the ground axiom is a natural property to expect of a canonical inner model - at the least, such a model should not have arisen through forcing over some smaller model (else why not simply take the smaller model?).  Taking it one step further, we can hope that the ground axiom might be a signal that we are in such a minimal, essential model.  Unfortunately, this hope is dashed by the following theorem:

\begin{theorem}[\cite{Reitz2007:TheGroundAxiom,Reitz2006:Dissertation}]
Every model $V$ of set theory has an extension $\Vbar$ which satisfies the ground axiom.
\end{theorem}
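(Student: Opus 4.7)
The plan is to construct $\Vbar$ as a class-forcing extension of $V$ via a carefully chosen Easton-support iteration $\P$ of length $\mathrm{Ord}$ that encodes its own generic filter into the continuum function of the extension. The ground axiom then follows because any would-be ground $W \subsetneq \Vbar$ must agree with $\Vbar$ on cardinal arithmetic past some set-sized cutoff, and this forces $W$ to recover the generic wholesale and hence coincide with $\Vbar$.

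First I would define $\P$ to be the $\mathrm{Ord}$-length Easton-support class iteration such that at each regular cardinal stage $\alpha$ the stage forcing $\dot{\Q}_\alpha$ is either trivial or $\mathrm{Add}(\alpha^{+},1)$, the choice being specified in advance so that the value of $2^{\alpha^{+}}$ in the extension signals whether stage $\alpha$ was nontrivial. A routine application of Easton's lemma together with the standard preservation arguments for tame class forcing shows that $\P$ preserves $\ZFC$, cardinals, and cofinalities, so $\Vbar := V[G]$ is a definable class extension satisfying $\ZFC$. By design, the generic filter $G$ is then uniformly definable over $\Vbar$ from the continuum function (with a fixed set parameter).

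The main step is verifying the ground axiom in $\Vbar$. Suppose for contradiction that $W \subsetneq \Vbar$ is a ground, so $\Vbar = W[H]$ with $H \subseteq \Q$ for some $\Q \in W$. Choose a regular $\kappa > |\Q|^{\Vbar}$. Standard set-forcing preservation lemmas give $(2^\alpha)^W = (2^\alpha)^{\Vbar}$ for all regular $\alpha \geq \kappa$, so $W$ can internally read off the tail of $G$ above $\kappa$ using the coding. Factoring $\P = \P_{<\kappa} * \dot{\P}_{\geq\kappa}$ and controlling the bounded-size initial segment — together with the intermediate-model theorem for set forcing and the uniform definability of grounds from Theorem~\ref{Theorem.ParameterizedGroundsW_r} — one concludes that $G \in W$, whence $\Vbar = V[G] \subseteq W$, contradicting $W \subsetneq \Vbar$.

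The main obstacle is precisely this decoding step: arranging the coding so that the continuum function in $\Vbar$ robustly reveals the generic even when filtered through an arbitrary intermediate set forcing $\Q$, and treating the below-$\kappa$ portion of $G$ carefully. The delicate point is that $W$ need not contain $V$, so one cannot naively paste $G \restriction \kappa \in V$ onto the decoded tail; instead one must exploit the intermediate-model structure of $W \subseteq \Vbar$ (for instance by showing that a sufficient initial piece of $G$ is forced into every ground with critical forcing size $<\kappa$) to patch together a full reconstruction of $G$ inside $W$.
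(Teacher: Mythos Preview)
The paper itself offers only a one-line remark (that $\Vbar$ is built by class forcing rather than set forcing), so the real comparison is to the argument in the cited references. Your overall architecture --- an $\Ord$-length Easton-support iteration whose trace on the continuum function survives into any set-forcing ground above a cutoff --- is the right shape and is essentially Reitz's strategy. But the specific iteration you describe does not do what you claim. First, $\mathrm{Add}(\alpha^{+},1)$ adds a single Cohen subset of $\alpha^{+}$ and (under the ambient \GCH\ one has after Easton preparation) leaves $2^{\alpha^{+}}=\alpha^{++}$ unchanged; to make the continuum function carry a readable bit you need to force $2^{\alpha^{+}}$ to two distinguishable values, e.g.\ $\alpha^{++}$ versus $\alpha^{+++}$. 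Second, if the trivial/nontrivial pattern is truly ``specified in advance'' in $V$, then the continuum function of $\Vbar$ recovers only that predetermined pattern, not the Cohen generics actually added at the nontrivial stages --- so $G$ is \emph{not} definable from the continuum function, and your main step collapses.

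The obstacle you isolate in your last paragraph is genuine, and it is precisely why the published proof encodes more than $G$. The iteration is arranged so that in $\Vbar$ \emph{every} set of ordinals is coded into the \GCH\ pattern on a tail (the Continuum Coding Axiom). Then for an arbitrary ground $W$ with $\Vbar=W[H]$, $H\subseteq\Q\in W$, and an arbitrary $a\in\Vbar$, one finds a code for $a$ living entirely above $|\Q|$, where $W$ and $\Vbar$ have the same continuum function; hence $a\in W$, and $W=\Vbar$ outright. This dissolves the ``below-$\kappa$'' patching problem completely and requires neither the intermediate-model theorem nor the uniform definability of grounds (indeed, the ground-axiom result historically precedes Theorem~\ref{Theorem.ParameterizedGroundsW_r}). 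Your coding-the-generic idea can be made to work, but only if the iteration is genuinely self-referential --- each block of stages coding the generic added at earlier blocks --- and even then the cleanest route to the ground axiom is to code all sets rather than just $G$.
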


A note about the proof: it is clear that $\Vbar$ cannot be constructed from $V$ by set forcing (else it would have a ground model, $V$ itself, and thus the ground axiom would not hold).  Instead, we construct $\Vbar$ using the more powerful (but less tractable) idea of \emph{class forcing}, in which our partial order is a proper class.



If the ground axiom fails, then set-theoretic geology has a nontrivial domain of discourse and we can begin to ask questions about the structure of the grounds.  For example, we can ask what happens if we take ``two steps down:''

\begin{question}
    Is a ground of a ground also a ground?
\end{question}

The answer is yes, and the proof is provided by the long-established theory of \emph{iterated forcing}, a very beautiful notion which shows us how to combine two steps of forcing into a single step.

More generally, what if we take two steps down but ``in different directions,'' - that is, what if we take two grounds and intersect them.  Do we get a ground? Or, easing the restriction a bit, can we find a ground contained in their intersection? We refer to this property as downward directedness.

\begin{question}\cite{Reitz2006:Dissertation}
    Are the grounds downward directed? That is, for any grounds $W_r$ and $W_s$ does there exist a ground $W_t \subset W_r \cap W_s$?    
\end{question}

There is a stronger notion that allows for intersections of more grounds, that of downward set-directedness.

\begin{question}\cite{FuchsHamkinsReitz2015:Set-theoreticGeology}
    Are the grounds downward set-directed? That is, for any set of parameters $A$ does there exist a ground $W_t \subset \bigcap_{r\in A} W_r$?    
\end{question}

These are fundamental questions about the structure of grounds, with wide-ranging implications.  They remained stubbornly open despite our best efforts to resolve them (until very recently - see the next section).

Note that all grounds have a great many elements in common - for example, the ordinals are in every ground, as are all of the constructible sets.  How far does this go? What can we say about the common elements of all the grounds?  These questions give rise to the notion of the mantle, one of the primary objects of interest in set-theoretic geology.

\begin{definition}\cite{FuchsHamkinsReitz2015:Set-theoreticGeology}
The \emph{mantle} $\Mantle$ of a model of set theory is the
intersection of all of its grounds.
\end{definition}

The mantle is a first-order definable class, as $\Mantle = \{x \mid \forall r \, x\in W_r \}$.  It is transitive and contains all ordinals and all constructible sets.  What more can we say about it? It is, again, a natural candidate for a canonical inner model, and so we might expect it to have some nice structural or combinatorial properties.  The following theorem contradicts this expectation entirely, showing that literally \emph{anything} that can occur in a model of \ZFC, can also occur in the mantle.

\begin{theorem}\cite{FuchsHamkinsReitz2015:Set-theoreticGeology}
Every model of \ZFC\ is the mantle of another model - that is, every model $V$ has an extension $\Vbar$ in which the mantle is exactly $V$, so $\Mantle^\Vbar = V$.
\end{theorem}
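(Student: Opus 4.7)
The plan is to obtain $\Vbar$ as a carefully designed class-forcing extension of $V$, in the spirit of the earlier theorem that every model of set theory has an extension satisfying the ground axiom. I would define over $V$ a proper class forcing $\P$---a reverse Easton iteration of set-sized forcings---and take $\Vbar = V[G]$ for a $V$-generic $G$. The goal is to arrange $\P$ so that $V$ is precisely the intersection of the (set-forcing) grounds of $\Vbar$.

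For the inclusion $V \subseteq \Mantle^\Vbar$, the key is to code $V$ into $\Vbar$ in a way that is robust under passing to any set-forcing ground. The standard device is to manipulate a combinatorial invariant such as the continuum function at a proper class of cardinals: at each regular $\kappa$, the $\kappa$-th stage of $\P$ would force $2^\kappa$ to take a value that encodes a designated piece of $V$ (for instance, a code for the structure $V_\kappa$). Because any set-forcing ground $W$ of $\Vbar$ arises from forcing of some bounded size $\delta$, the model $W$ must agree with $\Vbar$ on the continuum function at cardinals above $\delta$. Hence $W$ can decode $V_\kappa$ for every sufficiently large $\kappa$, yielding $V = \bigcup_{\kappa > \delta} V_\kappa \subseteq W$. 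Intersecting over all grounds gives $V \subseteq \Mantle^\Vbar$.

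The reverse inclusion $\Mantle^\Vbar \subseteq V$ requires exhibiting, for each $x \in \Vbar \setminus V$, a ground of $\Vbar$ that excludes $x$. This direction is more subtle because class forcing does not automatically make $V$ itself a set-forcing ground of $\Vbar$. The idea is to exploit the iteration's factorization: each $x \in \Vbar \setminus V$ is added using only a bounded portion of the generic, and by modifying $G$ on a suitable coordinate---while leaving the coding pattern intact everywhere else---one produces an alternate set-forcing ground of $\Vbar$ not containing $x$.

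The main obstacle, and the heart of the argument, is this second direction. One must verify that the intermediate models produced by shuffling the generic on particular coordinates really are set-forcing grounds of $\Vbar$; this is a delicate absorption and factorization argument using the structure of the reverse Easton iteration. Balancing this against the coding requirement---where robustness demands that decoding $V$ survive every such shuffle---is the real technical core, and where I would expect the bulk of the work to lie.
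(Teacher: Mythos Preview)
The paper under review is expository and does not actually supply a proof of this theorem; it merely states the result with a citation to \cite{FuchsHamkinsReitz2015:Set-theoreticGeology}. So there is no in-paper proof to compare against directly.

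That said, your proposal is a faithful sketch of the argument in the cited reference. The class forcing is indeed a progressively closed Easton-support iteration that codes $V$ into the continuum function (what the authors call the \emph{continuum coding axiom}), and your argument for $V \subseteq \Mantle^{\Vbar}$ is exactly right: any set-forcing ground agrees with $\Vbar$ on a tail of the continuum pattern and hence recovers all of $V$. For the reverse inclusion your description is slightly imprecise: rather than ``modifying $G$ on a coordinate,'' the construction arranges that each stage adds a large supply of mutually generic Cohen-type subsets, so that for any $x \in \Vbar \setminus V$ one can \emph{omit} a piece of the generic at some stage to obtain a genuine set-forcing ground of $\Vbar$ that excludes $x$ while preserving the coding elsewhere. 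The absorption/factorization issues you flag are real and are handled by the closure and chain-condition features of the Easton iteration. In short, your outline is correct and matches the published proof; only the mechanism for producing the omitting grounds should be phrased as ``dropping part of the generic'' rather than ``shuffling'' it.
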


This theorem shows what is \emph{possible} in the mantle - but can we say anything about what must necessarily hold there?  Is the mantle, for example, necessarily a model of \ZFC?  Interestingly, the answer seems to be tied up in questions of directedness.

\begin{theorem}\cite{FuchsHamkinsReitz2015:Set-theoreticGeology}
\begin{enumerate}
\item If the grounds are downward directed, then the mantle is a model of \ZF.
\item If the grounds are downward set-directed, then the mantle is a model of \ZFC.
\end{enumerate}
\end{theorem}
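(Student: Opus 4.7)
The plan is to exploit a striking consequence of directedness: it forces the mantle to be stable under passage to grounds. First I would establish the key lemma that, under downward directedness, $\Mantle^V = \Mantle^{W_r}$ for every ground $W_r$ of $V$. The $\subseteq$ direction follows from iterated forcing---a ground of a ground of $V$ is a ground of $V$---so every ground of $W_r$ is a ground of $V$, and anything in every ground of $V$ lies in every ground of $W_r$. For $\supseteq$, given any ground $W_s$ of $V$, downward directedness produces a ground $W_t \subseteq W_r \cap W_s$; since $W_t \subseteq W_r \subseteq V = W_t[G]$, the intermediate model theorem for set forcing shows that $W_r$ is itself a forcing extension of $W_t$, so $W_t$ is a ground of $W_r$. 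Hence any $x \in \Mantle^{W_r}$ lies in $W_t$ and therefore in $W_s$, for every ground $W_s$ of $V$.

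For part (1), I would then verify the \ZF\ axioms in $\Mantle$ one at a time. Extensionality, Foundation, Pairing, Union, and Infinity are automatic for any transitive class containing $\omega$. The substantive axioms are Separation, Replacement, and Power Set, and for each the strategy is the same. Given a set $a \in \Mantle$ together with a definable construction using formulas relativized to $\Mantle$, first use the corresponding axiom in $V$ to produce the intended output as a set $b \in V$; then show $b \in W_r$ for every $r$. This uses the key lemma: by Theorem \ref{Theorem.ParameterizedGroundsW_r}, the same uniform formula defines $\Mantle$ in $V$ and in $W_r$, so the construction of $b$ can be carried out verbatim inside $W_r$ by applying the corresponding \ZFC\ axiom there, witnessing $b \in W_r$.

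For part (2), I must produce, for each nonempty $a \in \Mantle$, a well-ordering of $a$ lying in $\Mantle$. Here set-directedness enters decisively. Let $c = a \times a \in \Mantle$ (by Pairing in $\Mantle$ from part 1). Working in $V \models \ZFC$, for each subset $b \subseteq c$ with $b \notin \Mantle$ choose an index $r_b$ with $b \notin W_{r_b}$. The collection $A = \{r_b : b \in \mathcal{P}(c) \setminus \Mantle\}$ is a set because $\mathcal{P}(c)$ is. By downward set-directedness there is a parameter $t$ with $W_t \subseteq \bigcap_{r \in A} W_r$, and by construction $\mathcal{P}(c)^{W_t} \subseteq \Mantle$. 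Since $W_t \models \ZFC$, it contains a well-ordering $R$ of $a$, which is a subset of $c$ lying in $W_t$; hence $R \in \Mantle$, giving the required well-ordering.

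The main obstacle will be the $\supseteq$ direction of the key lemma together with its reliance on the uniform definability provided by Theorem \ref{Theorem.ParameterizedGroundsW_r}: without a single formula defining the grounds across all models, we would have no guarantee that interpreting ``the mantle'' inside a ground yields the same class as in $V$. Once that stability is in hand, the \ZF\ verifications reduce to routine relativization arguments, and the role of set-directedness in supplying Choice is conceptually clean---the real technical heart of the argument is the closure of the mantle under passage to grounds.
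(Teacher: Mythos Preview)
The paper under review is expository and does not itself supply a proof of this theorem; it merely states the result and cites \cite{FuchsHamkinsReitz2015:Set-theoreticGeology}. Your proposal is correct and tracks the argument given in that cited source: the invariance lemma $\Mantle^V=\Mantle^{W_r}$ under directedness, the verification of the \ZF\ axioms by relativizing each instance to an arbitrary ground via that invariance, and the use of set-directedness to find a single ground whose subsets of a given $c\in\Mantle$ already lie in $\Mantle$, yielding Choice.

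Two minor remarks. First, your justification ``$c=a\times a\in\Mantle$ (by Pairing in $\Mantle$)'' understates what is needed; forming Cartesian products requires more than Pairing, but this is harmless since set-directedness implies directedness, so by part~(1) you already have all of \ZF\ in $\Mantle$. Second, your appeal to Theorem~\ref{Theorem.ParameterizedGroundsW_r} for the uniformity of the mantle definition across grounds is exactly right, but it is worth making explicit that the theorem is itself a theorem of \ZFC, so the same formula, interpreted inside any ground $W_r$, parameterizes the grounds of $W_r$; together with your invariance lemma this is what guarantees that the defined class $\Mantle$ is literally the same whether the defining formula is read in $V$ or in $W_r$. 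With those points noted, the argument goes through as written.
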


\subsection{Recent developments in geology}
\label{Subsection.RecentDevelopmentinGeology}

In Fall 2015 the set theorist Toshimichi Usuba announced a result that resolves a great many of the fundamental open questions in set-theoretic geology.  While the result is not yet published, it has generated a great deal of interest in the set theory community and has featured in conference presentations by Usuba and others, notably Hamkins, in a number of international forums.

\begin{theorem}[Usuba]
The grounds are downward set-directed.
\end{theorem}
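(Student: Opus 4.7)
The plan is to characterize grounds by an internal covering property and then use a reflection-style amalgamation to produce a common ground below any set of grounds. The starting point is a classical theorem of Bukovský, which says that a transitive inner model $W \models \ZFC$ is a ground of $V$ if and only if there is some cardinal $\kappa$ such that $V$ satisfies the $\kappa$-global covering property over $W$: every function $f : \text{Ord} \to \text{Ord}$ in $V$ is dominated by a function $F \in W$ with $f(\alpha) \in F(\alpha)$ and $|F(\alpha)|^W < \kappa$ for all $\alpha$. This converts the second-order statement ``$V$ arises by forcing over $W$'' into a first-order combinatorial condition over $V$, and it is this condition that one tries to arrange for the candidate common ground.

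Given a set $A$ of parameters indexing grounds $\{W_r \mid r \in A\}$, the first step is to choose a regular cardinal $\kappa$ exceeding $|A|$ and large enough that every $W_r$ witnesses its Bukovský covering with a bound under $\kappa$. The uniform first-order definition of grounds from Theorem \ref{Theorem.ParameterizedGroundsW_r} is essential here: it means the family $\{W_r \mid r \in A\}$ is coded by the single set $A$, so we can quantify over it in $V$ and do not have to handle a proper class of proper classes. Consider the candidate model $M = \bigcap_{r \in A} W_r$, which is transitive, contains all ordinals, and is first-order definable from $A$. The goal is to locate a ground $W_t \subset M$.

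The heart of the argument is an amalgamation: for each $r \in A$ one has a $W_r$-covering function $F_r$ for a fixed $V$-function $f$, and these must be combined into a single covering function lying inside $M$. I would fix a sufficiently large $\lambda$ and work inside $V_\lambda$, extracting from each $W_r$ a bounded piece of its covering structure (using the $\kappa$-bound to keep the extracted pieces small) and taking a definable common refinement. The output is a covering family that lives simultaneously in every $W_r$, hence in $M$, and still dominates $f$ with bound $\kappa'$ for some enlarged $\kappa'$ depending only on $|A|$ and $\kappa$. Applying Bukovský's theorem in the reverse direction yields a ground $W_t \subset M$, and the uniform definability of grounds identifies the parameter $t$.

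The main obstacle is exactly this amalgamation step. A naive intersection of the $W_r$-covering functions loses the covering property entirely, since $|A|$-sized intersections of large sets can become empty, so one cannot simply overlay the $F_r$. The technical work lies in choosing $\kappa$ large enough, and the extracted pieces of each $W_r$ small enough, that cardinal arithmetic forces the refined family to remain a bona fide Bukovský cover — this is where the genuine insight of Usuba's argument sits, and where a careful proof has to be done rather than sketched.
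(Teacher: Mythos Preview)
The paper does not contain a proof of this theorem. In Section~\ref{Subsection.RecentDevelopmentinGeology} it is presented only as a then-recently-announced, not-yet-published result of Usuba, followed by a short list of consequences (the mantle is a model of \ZFC, the mantle equals the generic mantle, every model in the generic multiverse is within two steps of $V$, etc.). There is therefore no proof in the paper against which to compare your proposal.

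As a remark on the proposal itself: you have correctly identified Bukovsk\'y's uniform covering characterization of grounds as the central tool, and this is indeed the engine of Usuba's published argument. You are also candid that the amalgamation step --- producing a single covering function for $V$ that lies in $\bigcap_{r\in A} W_r$ --- is where the genuine content sits and that you have not carried it out. That assessment is accurate: the outline is a reasonable plan, but the hard combinatorics remain unexecuted, so what you have written is a road map rather than a proof.
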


This implies, among many other things, that the mantle is a model of \ZFC, that the mantle is equal to the generic mantle, that the mantle is equal to the intersection of the generic multiverse of $V$, and that every model in the generic multiverse is no more than two steps away from $V$ itself (an extension of a ground of $V$).

\section{Conclusion}

Set-theoretic geology is of interest because of what it reveals about the relationships between models in a local neighborhood of the set-theoretic multiverse, shedding light onto the structure and properties of the technique of forcing.  It also presents an appealing template for first-order exploration of second-order phenomena - a loose extension of the geology project might be the investigation of other interesting neighborhoods of the multiverse that are amenable to first-order analysis.  A more ambitious extension is the direct study of the multiverse as a whole, building the tools necessary for the task of uncovering the interrelationships between wildly disparate models. Whatever the success of such a program, the multitude of universes revealed by the phenomenon of independence presents a rich and complex landscape, ripe for ongoing exploration.

\begin{acknowledgements}
Thanks to Mihir Chakraborty and Michele Friend for their generous and inclusive approach to mathematics and philosophy, truly the embodiment of pluralism, and to Joel David Hamkins for his ongoing support and shared excitement in the work.
\end{acknowledgements}

\bibliographystyle{spmpsci}      
\bibliography{HamkinsBiblio,LogicBiblio,OtherBiblio}   

\begin{thebibliography}{10}
\providecommand{\url}[1]{{#1}}
\providecommand{\urlprefix}{URL }
\expandafter\ifx\csname urlstyle\endcsname\relax
  \providecommand{\doi}[1]{DOI~\discretionary{}{}{}#1}\else
  \providecommand{\doi}{DOI~\discretionary{}{}{}\begingroup
  \urlstyle{rm}\Url}\fi

\bibitem{Beltrami1868}
Beltrami, E.: Teoria fondamentale degli spazii di curvatura costante.
\newblock Annali di Matematica Pura ed Applicata (1867-1897) \textbf{2}(1),
  232--255 (1868).
\newblock \doi{10.1007/BF02419615}.
\newblock \urlprefix\url{http://dx.doi.org/10.1007/BF02419615}

\bibitem{DavisHersh1981:TheMathematicalExperience}
Davis, P.J., Hersh, R.: The Mathematical Experience.
\newblock Houghton Mifflin, Boston and New York (1981)

\bibitem{FuchsHamkinsReitz2015:Set-theoreticGeology}
Fuchs, G., Hamkins, J.D., Reitz, J.: Set-theoretic geology.
\newblock Annals of Pure and Applied Logic \textbf{166}(4), 464--501 (2015).
\newblock \doi{10.1016/j.apal.2014.11.004}.
\newblock \urlprefix\url{http://jdh.hamkins.org/set-theoreticgeology}

\bibitem{Hamkins2012:TheSet-TheoreticMultiverse}
Hamkins, J.D.: The set-theoretic multiverse.
\newblock Review of Symbolic Logic \textbf{5}, 416--449 (2012).
\newblock \doi{10.1017/S1755020311000359}.
\newblock \urlprefix\url{http://jdh.hamkins.org/themultiverse}

\bibitem{HamkinsMO:StatusofCH}
(http://mathoverflow.net/users/1946/joel-david hamkins), J.D.H.: Solutions to
  the continuum hypothesis.
\newblock MathOverflow.
\newblock \urlprefix\url{http://mathoverflow.net/q/25199}.
\newblock URL:http://mathoverflow.net/q/25199 (version: 2010-05-19)

\bibitem{Hilbert:FoundationsOfGeometry}
Hilbert, D.: Foundations of Geometry.
\newblock Open Court (1999).
\newblock
  \urlprefix\url{http://www.amazon.com/Foundations-Geometry-David-Hilbert/dp/0875481647%3FSubscriptionId%3D0JYN1NVW651KCA56C102%26tag%3Dtechkie-20%26linkCode%3Dxm2%26camp%3D2025%26creative%3D165953%26creativeASIN%3D0875481647}

\bibitem{Laver2007:CertainVeryLargeCardinalsNotCreated}
Laver, R.: Certain very large cardinals are not created in small forcing
  extensions.
\newblock Annals of Pure and Applied Logic \textbf{149}(1), 1 -- 6 (2007).
\newblock \doi{http://dx.doi.org/10.1016/j.apal.2007.07.002}.
\newblock
  \urlprefix\url{http://www.sciencedirect.com/science/article/pii/S0168007207000607}

\bibitem{Reitz2006:Dissertation}
Reitz, J.: The ground axiom.
\newblock Ph.D. thesis, The Graduate Center of the City University of New York,
  365 Fifth Avenue, New York, NY 10016 (2006)

\bibitem{Reitz2007:TheGroundAxiom}
Reitz, J.: The ground axiom.
\newblock The Journal of Symbolic Logic \textbf{72}(4), 1299--1317 (2007)

\bibitem{Simons1993-SIMWAO-2}
Simons, P.: Who's afraid of higher-order logic?
\newblock Grazer Philosophische Studien \textbf{44}, 253--264 (1993)

\bibitem{Woodin2004:RecentDevelopmentsOnCH}
Woodin, W.H.: Recent developments on {Cantor's Continuum Hypothesis}.
\newblock Proceedings of the Continuum in Philosophy and Mathematics  (1004).
\newblock Carlsberg Academy, Copenhagen, November 2004

\bibitem{Woodin2004:CHMultiverseOmegaConjecture}
Woodin, W.H.: The continuum hypothesis, the generic-multiverse of sets, and the
  {$\Omega$} conjecture.
\newblock In: J.~Kennedy, R.~Kossak (eds.) Set Theory, Arithmetic, and
  Foundations of Mathematics, pp. 13--42. Cambridge University Press (2011).
\newblock \urlprefix\url{http://dx.doi.org/10.1017/CBO9780511910616.003}.
\newblock Cambridge Books Online

\end{thebibliography}

%
%

\end{document}